\documentclass[12pt,a4paper]{article}
\usepackage[english]{babel}

\usepackage{hyperref}
\usepackage{pstricks}
\usepackage{pst-node}
\usepackage{amsmath,boxedminipage}
\usepackage{amssymb}
\usepackage{graphicx}
\usepackage{enumerate}
\usepackage{color}
\usepackage[text={15cm,24cm}]{geometry}
\usepackage[normalem]{ulem}
\usepackage[ansinew]{inputenc}
\usepackage{ulem}
\usepackage{float}
\usepackage{subcaption}
 \usepackage{algorithm,algorithmic}

\usepackage{color}

\newenvironment{proof}{{\bf Proof}:\ }%
   {~\ \hfill $\Box$\vspace{0,5cm}}

    {~\ \hfill$\Box$\vspace{0,5cm}}

\newtheorem{theorem}{Theorem}[section]

\newtheorem{conj}[theorem]{Conjecture}
\newtheorem{coro}[theorem]{Corollary}

\graphicspath{{.}{graphics/}}
\newrgbcolor{lightlightlightgray}{0.9 0.9 0.9}

\numberwithin{equation}{section}

\begin{document}
\title{Antimagicness  of join graphs}

\author{Gr\'egoire Beaudoire\footnotemark[1] \and C\'edric Bentz\footnotemark[1] \and
Christophe\ Picouleau\footnotemark[1] \footnotemark[2]
}
\date{\today}

\def\thefootnote{\fnsymbol{footnote}}

\footnotetext[1]{ \noindent
Conservatoire National des Arts et M\'etiers, CEDRIC laboratory, Paris (France). Email: {\tt
gregoire.beaudoire@lecnam.net,cedric.bentz@cnam.fr,christophe.picouleau@cnam.fr}}

\footnotetext[2]{ \noindent Corresponding author}

\maketitle

\begin{abstract}
An antimagic labelling of a graph $G = (V,E)$ is a bijection from $E$ to $\{1,2, \ldots, |E|\}$, such that all vertex-sums are pairwise distinct, where the vertex-sum of each vertex is the sum of labels over edges incident to this vertex. A graph is antimagic if it has an antimagic labelling.
Hartsfield and Ringel in 1990  stated the celebrated  conjecture: Every connected graph other than $K_2$ is antimagic. We prove the conjecture  for join graphs with at least three vertices.

 \vspace{0.2cm}
\noindent{\textbf{Keywords}\/}: Cograph, Join \href{}{}graph, Antimagic labelling.
\end{abstract}

\section{Introduction}\label{intro}
We only consider finite, simple and undirected graphs $G = (V,E)$ with $|V| = n$ and $|E| = m$. We denote by $d(v)$ the degree of a vertex $v \in 
V$ and by $\Delta(G) = \max\limits_{v \in V} d(v)$. By $K_n$ and $K_{p,q}$ we denote respectively the complete graph on $n$ vertices and  the complete bipartite graph on $p$ and $q$ vertices. 

Given a graph $G = (V,E)$, let $f : E \rightarrow \{1,2,\ldots, m\}$ be a bijective labelling of the edges of $G$. For each vertex $u \in V$, we will write $\sigma(u) = \sum\limits_{v \in V | uv \in E} f(uv)$ the sum of labels over edges incident to $u$. If all the values of $\sigma(u)$ are pairwise distinct, then $f$ is called an \emph{antimagic labelling} of $G$. If $G$ admits at least one antimagic labelling, $G$ is said to be \emph{antimagic}.

Antimagic labelling was originally introduced by Hartsfield and Ringel in 1990 \cite{hartsfield1990}, with the following conjecture:

\begin{conj}
    Every connected graph other than $K_2$ is antimagic.
\end{conj}

Despite numerous results on this subject the conjecture is still open even for the class of trees. 

The topic is the focus of Chapter 6 in the dynamic survey, updated yearly, on graph labelling by J. Gallian \cite{survey}.\\

In this note we show that if $G$ is a join graph with at least three vertices then $G$ is antimagic.

 Before presenting our result  we first give some  terminology and notations.
 Let $G_1$ and~$G_2$ be two vertex disjoint graphs. The {\it join} operation $\otimes$ adds an edge between every vertex of $G_1$ and every vertex of $G_2$. The {\it union} operation $\oplus$ creates the disjoint union $G_1 \oplus G_2$ of $G_1$ and $G_2$, which is the graph with vertex set $V(G_1)\cup V(G_2)$ and edge set $E(G_1)\cup E(G_2)$. 

A graph $G=(V,E)$ is a {\it join graph} if it is the join of two vertex disjoint graphs, that is $G=G_1\otimes G_2$.

Among the class of join graphs a well-known subclass is the one of connected cographs \cite{BLS99}. We define this subclass here: a graph $G$ is a cograph if and only if $G$ can be generated from $K_1$ by a sequence of operations, where each operation is either $\otimes$ a join or $\oplus$ a union operation. 

Such a sequence corresponds to  a decomposition tree $T$, which has the following properties:
\begin{itemize}
\item [1.]  Its root $r$ corresponds to the graph $G_r=G$.
\item [2.]  Every leaf $x$ of $T$ corresponds to exactly one vertex of $G$, and vice versa, implying that $x$ corresponds to a unique single-vertex graph $G_x=K_1$.
\item [3.]  Every internal node $x$ of $T$ has at least two children, is either labelled $\oplus$ or $\otimes$, and corresponds to an induced subgraph $G_x$ of $G$ defined as follows:
\begin{itemize}
\item if $x$ is a $\oplus$-node, then $G_x$ is the disjoint union of all graphs $G_y$ where $y$ is a child of $x$;
\item if $x$ is a $\otimes$-node, then $G_x$ is the join of all graphs $G_y$ where $y$ is a child of $x$.
\end{itemize}
\end{itemize}
A cograph $G$ may have more than one such tree but has exactly one unique tree~\cite{CHS81}, called the {\it cotree} $T_G$ of $G$, if the following additional property is required: 
\begin{itemize} 
\item [4.] Labels of internal nodes on the (unique) path from any leaf to $r$ alternate between $\oplus$ and $\otimes$.
\end{itemize}

In \cite{DPPR18} Diner et al. show how to modify the cotree $T_G$ into a equivalent binary tree that will be used later into our proof:

 Whenever an internal node~$x$ of $T_G$ has more than two children $y_1$ and $y_2$, we remove the edges $xy_1$ and $xy_2$ and add a new vertex $x'$ with 
edges $xx'$, $x'y_1$ and $x'y_2$. If~$x$ is a $\oplus$-node, then $x'$ is a $\oplus$-node, and if $x$ is a $\otimes$-node, then $x'$ is a 
$\otimes$-node. Applying this rule exhaustively yields a tree in which each internal node has exactly two children. 

From a complexity point of view  the authors show that  this modification of $T_G$  takes linear time.

 Hence the connected cographs $G$ are such that $G=G_1\otimes G_2$ where $G_1,G_2$ are two cographs, so they are join graphs.
 
 Recall also that a graph $G$ is a cograph if and only it is $P_4$-free \cite{BLS99}.

 Among the join graphs, some well-known examples are: $K_n$ the complete graph on $n$ vertices, $K_{n,m}$ the complete bipartite graph with side sizes $n,m$, the complete $k$-partite graph $K_{n_1,n_2,\cdots,n_k}$, which is a connected cograph, $W_k=C_k\otimes K_1$ the wheel with $k$ spokes which is not a cograph when $k\ge 5$, and any graph $G$ when its complement $\bar G$ is not connected.

Some results concerning special cases of join graphs are given by Alon et al.  and by Ba\u{c}a et al.  (see \cite{alon2004},\cite{Baca15}).

 \section{The antimagicness of join graphs}
We prove here our main result.
 \begin{theorem}
    Join graphs $G=(V,E)$ with $\vert V\vert\ge 3$  are antimagic.
\end{theorem}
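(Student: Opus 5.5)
Write $G=G_1\otimes G_2$ with $V(G_1)=\{u_1,\dots,u_p\}$, $V(G_2)=\{v_1,\dots,v_q\}$, and assume without loss of generality $p\le q$. Since $|V|\ge 3$, we have $q\ge 2$. The edge set splits into the complete bipartite part $B=\{u_iv_j\}$, with $pq$ edges, and the inner edges $E_1=E(G_1)$ and $E_2=E(G_2)$.

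The plan is to label in two stages. First, assign the smallest labels $1,\dots,|E_1|+|E_2|$ arbitrarily to the inner edges. Then assign the largest $pq$ labels to $B$ in a structured way. This way the sums are dominated by the bipartite contributions, and those we control completely. For $B$, we use the standard idea for $K_{p,q}$: arrange the labels $m-pq+1,\dots,m$ in a $p\times q$ matrix $M$, with the entry $M_{ij}$ labelling $u_iv_j$. Row sums then give the $B$-contributions at the $u_i$, and column sums give those at the $v_j$. We choose $M$ so that the column sums form an arithmetic-like sequence with large gaps, and so that all row sums exceed all column sums by a margin. A convenient choice is to fill row by row in "snake" or plain lexicographic order. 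With $M_{ij}=m-pq+(i-1)q+j$, consecutive column sums differ by exactly $p$, and consecutive row sums differ by $q^2$.

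The key step is handling the perturbation from the inner edges. Each vertex's inner contribution is at most $\Delta$ times a small label, which can exceed the gaps of $p$ between column sums. To fix this, we do not reorder by vertex index but by inner sum. First fix the inner labelling and compute the partial sums $s(v_j)$ and $s(u_i)$. Then relabel the vertices of $G_2$ so that $s(v_1)\le\dots\le s(v_q)$, and likewise for $G_1$, and only then place the matrix. Column sums strictly increase in $j$ and inner sums weakly increase, so the $\sigma(v_j)$ are strictly increasing; the same holds for the rows. Hence vertices on the same side are automatically distinct.

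The remaining step, which I expect to be the main obstacle, is separating the two sides: we need $\sigma(u_i)\ne\sigma(v_j)$. A single $u_i$ receives $q$ large labels and a $v_j$ receives $p$ large labels. When $p<q$, I would show that $\min_i\sigma(u_i)>\max_j\sigma(v_j)$ by comparing the $q$ top labels against the $p$ top labels plus at most $\binom{q}{2}$-type inner contributions. This needs a careful inequality using $m-pq\ge$ the number of inner labels. When $p=q$, or when this inequality fails (dense $G_2$ with small $p$, e.g. $p=1$), I would try the first fix: place the largest labels in the row of the $u$-vertex and exploit the freedom to permute entries of $M$ within columns, or to swap two labels, to break any coincidence. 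Concretely, for $p=1$ the single vertex $u_1$ receives every $B$-label, so its sum exceeds everything else. For $p\ge2$, if some $\sigma(u_i)=\sigma(v_j)$, swap two entries of $M$ in the same row between adjacent columns. This changes the two column sums by $\pm1$ and keeps the row sums fixed. Ties are then resolved by a counting or parity argument that the swap preserves the strict monotonicity established above.
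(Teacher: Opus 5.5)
Your framework is exactly the paper's: smallest labels on $E(G_1)\cup E(G_2)$, each side sorted by inner sum, and the lexicographic matrix $M_{ij}=m-pq+(i-1)q+j$ on the bipartite edges, giving row gaps at least $q^2$ and column gaps at least $p$. Your $p=1$ argument is also fine. The gap is in the cross-side step, which is the whole difficulty. The separation $\min_i\sigma(u_i)>\max_j\sigma(v_j)$ you hope to prove for $p<q$ is false in general, even without inner edges. For $K_{2,4}=\overline{K_2}\otimes\overline{K_4}$ the rows of $M$ are $(1,2,3,4)$ and $(5,6,7,8)$, so $\sigma(u_1)=10$ and $\sigma(u_2)=26$, while the column sums are $6,8,10,12$. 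Everything therefore rests on the swap repair, and your claim that it preserves strict monotonicity fails for $p=2$. Since $M_{r,j+1}=M_{r,j}+1$ in every row $r$, any adjacent swap moves columns $j,j+1$ by $+1,-1$, while consecutive column sums may differ by exactly $2$. In the example, the conflict $\sigma(u_1)=\sigma(v_3)=10$ can only be turned into the same-side ties $9,9$ or $11,11$. The paper never meets this situation. It first disposes of $\min(n_1,n_2)\le 2$ by the theorem of Alon et al.\ that every graph $G\neq K_2$ with $\Delta(G)\ge n-2$ is antimagic, so that afterwards column gaps are at least $3$ and row gaps at least $9$.

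Even for $p,q\ge 3$, an unspecified \emph{counting or parity argument} does not cover what must be handled. First, the swap can create a new cross conflict. If $\sigma(v_{j+1})-1=\sigma(u_k)$ for some $k>i$, the paper also swaps in row $k$, moving the two columns by $\pm 2$. This is safe because the situation forces $\sigma(v_{j+1})-\sigma(v_j)\ge q^2+1\ge 10$. Second, several conflicts must be repaired without interfering. The paper always treats the conflict with smallest $i$, and maintains invariants ensuring that a vertex, once involved in a swap, is never touched again by the iteration. Third, a conflict with the last column $v_q$ has no right neighbour. The paper then swaps the entries of column $q$ in rows $i$ and $i+1$. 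Row sums move by $\pm q$, which is harmless since $q^2>2q$ and $\sigma(u_i)+q$ exceeds every column sum. If $i=p$, it instead swaps the labels $m-1,m$, possibly together with a second swap in a row $k$, with a separate subcase when that row was already swapped. These ingredients, together with the reduction to $p,q\ge 3$, are what your sketch is missing.
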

\begin{proof}
Let $G=G_1\otimes G_2$ be a join-graph with $\vert V(G_1)\vert=n_1,\vert V(G_2)\vert=n_2$ and $n_1+n_2\ge 3$. Let $m_1,m_2$ be respectively the number of edges in $G_1,G_2$.

 We know from \cite{alon2004} that if $G\ne K_2$ is a graph  with $\Delta(G)\ge n-2$ then $G$ is antimagic. If $n_1\le 2$  or $n_2\le 2$  then $\Delta(G)\ge n-2$ and so $G$ is antimagic. 
  
From now on, we assume that $n_1,n_2\ge3$.

Let $f$ be the following labeling of $G$: in a first stage we arbitrarily label the edges of $G_1,G_2$ with $\{1,2,\ldots,m_1+m_2\}$. Then we order the sum of the labels in $G_1$ and $G_2$ such that $\sigma(v_1)\le \sigma(v_2)\le\cdots\le\sigma(v_{n_1})$ and $\sigma(w_1)\le \sigma(w_2)\le\cdots\le\sigma(w_{n_2})$, respectively. 

  In the second stage we label the edges between $V(G_1)$ and $V(G_2)$. Let $m'=m_1+m_2$. First, we label the edges incident to $v_1$ with $\{m'+1,m'+2,\ldots, m'+n_2\}$ in the order $v_1w_1,v_1w_2,\ldots v_1w_{n_2}$. Second, we label the edges incident to $v_2$ with $\{m'+n_2+1,m'+n_2+2,\ldots, m'+2n_2\}$ in the order $v_2w_1,v_2w_2,\ldots v_2w_{n_2}$. We proceed in the  same way for $v_3,\ldots ,v_{n_1}$.
  
  We have the two following properties. First, concerning $G_1$, we have:
  
  \begin{equation}\sigma(v_{i+1})-\sigma(v_i)\ge n_2^2\ge 9, \forall~1\le i\le n_1-1\label{Equation_Sigma_Des_vi}\end{equation}
  
  Second, concerning $G_2$, we have:
  
  \begin{equation}\sigma(w_{j+1})-\sigma(w_j)\ge  n_1\ge 3, \forall~1\le j\le n_2-1\label{Equation_Sigma_Des_wj}\end{equation} 

We suppose that there are $v_i \in V(G_1)$ and $w_j \in V(G_2)$ such that $\sigma(v_i) =\sigma(w_j)$, otherwise we already have an antimagic labelling. In such a case, we say that $v_i$ and $w_j$ are in conflict. We will process iteratively until such a pair 
$(i,j)$ does not exist anymore for some $j < n_2$; any conflict involving $w_{n_2}$ will be solved afterwards. At each step of the process, for one or two vertices $z\in V(G_1)\cup V(G_2)$, the labels of two edges incident  to $z$ are swapped. Any such swap will be guaranteed not to affect the sorting of the $v_i$'s and $w_j$'s by increasing order of their value of $\sigma$.

For any $1 \leq s \leq n_1$ and $1 \leq t \leq n_2$, we define the sets $V_s = \{v_s, v_{s+1}, \ldots, v_{n_1}\}$ and $W_t = \{w_t, w_{t+1}, \ldots, w_{n_2}\}$.

We define two sets of vertices $V'$ and $W'$, initially $V' = V_1 = V(G_1)$ and $W' = W_1 = V(G_2)$. The sets $V'$ and $W'$ will be equal to some $V_s$ and $W_t$ during the process, with increasing values of $s$ and $t$, and will verify the following properties at any point of each iteration of the process:

\begin{enumerate}[(i)]

    \item Any conflict remaining in the graph is between a vertex in $V'$ and a vertex in $W'$;
    
    \item Any vertex in $V' \cup W'$ has not been involved in a label swap yet, meaning all its incident edges did not have their labels changed;

    \item Any vertex in $(V(G_1) \setminus V') \cup (V(G_2) \setminus W')$ will not be involved in any label swap during the rest of the process.

\end{enumerate}


We describe an iteration: we choose the pair $(i,j)$ such that $i$ is minimum, meaning that $(i,j)$ is the smallest such pair, since the vertices in $V(G_1)$ and $V(G_2)$ are numbered by increasing order of their values of $\sigma$.
Let us denote $f(v_iw_j) = p$.  

Firstly, we suppose that $ j < n_2$. We have $V' = V_p$ and $W' = W_q$, for some $p$ and $q$. Note that necessarily, since $v_i$ and $w_j$ are in conflict, $i \geq p$ and $j \geq q$, and any vertices $v_{i'}$ and $w_{j'}$, with $i' > i$ and $j' > j$, are in $V'$ and $W'$.
  We consider the two following cases:
\begin{itemize}
\item  $\sigma(w_{j+1})-1\ne \sigma(v_k), \forall k\ge i+1$:  We have $f(v_iw_{j+1}) = p + 1$, as $w_j,w_{j+1} \in W'$. Swapping the two labels $p$ and $p + 1$ we increase $\sigma(w_j)$ by one and decrease $\sigma(w_{j+1})$ by one, all the other values of $\sigma$ stay unchanged. Since $\sigma(w_{j+1})-\sigma(w_j) \ge 3$ from Property \ref{Equation_Sigma_Des_wj}, we still have $\sigma(w_j) <\sigma(w_{j+1})$, and, from Property \ref{Equation_Sigma_Des_vi}, $w_j$ cannot be in conflict with any $v_h$, for $h=1,\dots, n_1$, after the modification.

Since $(i,j)$ was the smallest pair such that $v_i$ and $w_j$ were in conflict, we know that any $v_{i'}$, with $i' < i$, and any $w_{j'}$, with $j' < j$, cannot be in conflict with another vertex. Since we also solved the conflict between $v_i$ and $w_j$, and we know that by assumption $w_{j+1}$ is not in conflict with any other vertex either, we can update our sets $V'$ and $W'$: $V' = V_{i+1}$ and $W' = W_{j+2}$. It is then easy to see that $V'$ and $W'$ still satisfy (i), (ii) and (iii) after their update.

\item $\sigma(w_{j+1})-1= \sigma(v_k)$ for some $k\ge i+1$: we have $f(v_iw_{j+1}) = p + 1$, as $w_j,w_{j+1} \in W'$, and since $\sigma(v_k)-\sigma(v_i)\ge 9$ from Property \ref{Equation_Sigma_Des_vi} we have $\sigma(w_{j+1})-\sigma(w_j)\ge 10$. Let  $f(v_kw_j) = q,f(v_kw_{j+1}) = q + 1$. Swapping the  labels $p$ and $p + 1$ and the labels $q$ and $q+1$ we increase $\sigma(w_j)$ by two and decrease $\sigma(w_{j+1})$ by two, all the other values of $\sigma$ stay unchanged. Since $\sigma(w_{j+1})-\sigma(w_j) \ge 10$ we still have $\sigma(w_j) <\sigma(w_{j+1})$, and, from Property \ref{Equation_Sigma_Des_vi}, $w_j$ and $w_{j+1}$ cannot be in conflict with any $v_h$, for $h=1,\dots, n_1$, after the modification. From Properties \ref{Equation_Sigma_Des_vi} and \ref{Equation_Sigma_Des_wj}, $v_k$ is not in conflict either.

As in the previous case, any $v_{i'}$, with $i' < i$, and $w_{j'}$, with $j' < j$, cannot be in conflict with another vertex. Note that, since we had a conflict between $v_i$ and $w_j$, and we had $\sigma(v_k) = \sigma(w_{j+1}) - 1$, it is impossible for any $v_{i'}$, with $i < i' < k$, to be in conflict with another vertex (as from Property \ref{Equation_Sigma_Des_vi} $\sigma(v_i) < \sigma(v_{i'}) < \sigma(v_k)$ and $\sigma(w_j) < \sigma(v_{i'}) < \sigma(w_{j+1})$). We can then update our sets $V'$ and $W'$: $V' = V_{k+1}$ and $W' = W_{j+2}$. Again, it is then easy to see that $V'$ and $W'$ still satisfy (i), (ii) and (iii) after their update.
\end{itemize}

Now we consider the case  where $j=n_2$. There is at most one conflict left in the graph, between $w_{n_2}$ and some $v_i$. From Properties (i) and (ii) verified by $V'$ and $W'$, we have necessarily that $v_i \in V'$ and that the labels of the edges incident to $v_i$ have not been changed during the process. We have the two following cases:

\begin{itemize}

\item  $i<n_1$: We have $f(v_{i+1}w_{n_2}) = p + n_2$, as $w_{n_2} \in W'$. Swapping the two labels $p$ and $p + n_2$ we increase $\sigma(v_i)$ by $n_2$ and decrease $\sigma(v_{i+1})$ by $n_2$, the other values of  $\sigma$ stay unchanged. Since from Property \ref{Equation_Sigma_Des_vi} $\sigma(v_{i+1})-\sigma(v_i) \ge n_2^2$ and $n_2\ge 3$ (meaning $n_2^2 > 2n_2$), we still have $\sigma(v_{i'}) <\sigma(v_{i'+1})$ for each $i' \in \{i, \dots, n_1\}$. Since $\sigma(v_i)$ strictly increases, we have that after the modification $\sigma(v_{i'}) > \sigma(w_h)$ for each $i' \in \{i, \dots, n_1\}$ and each $h \in \{1,\dots,n_2\}$, and since $j=n_2$ no more conflict can arise, and the labelling is antimagic;

\item $i=n_1$: We have  $f(v_{n_1}w_{n_2})=m$ and $f(v_{n_1}w_{n_2-1})=m-1$, as $v_{n_1} \in V'$. If there is no $v_k$ such that $\sigma(w_{n_2-1})+1=\sigma(v_k)$ then swapping the two labels $m$ and $m- 1$ we increase $\sigma(w_{n_2-1})$ by one and decrease $\sigma(w_{n_2})$ by one, the other values of $\sigma$ stay unchanged. Since after the previous modifications we have $\sigma(w_{n_2})-\sigma(w_{n_2-1})\ge 3$ now we still have $\sigma(w_{n_2})>\sigma(w_{n_2-1})$. Moreover, from Property \ref{Equation_Sigma_Des_vi}, $w_{n_2-1}$ and $w_{n_2}$ cannot be in conflict with any $v_h$, for $h=1,\dots, n_1$, and hence the labelling is antimagic.

Now, we suppose that there exists $v_k$ such that $\sigma(w_{n_2-1})+1=\sigma(v_k)$. Let $f(v_kw_{n_2})=q$. If $v_k$ has not be involved in a label swap before then we have $f(v_kw_{n_2-1})=q-1$, so swapping the  labels $m$ and $m - 1$ and the labels $q$ and $q-1$ we increase $\sigma(w_{n_2-1})$ by two and decrease $\sigma(w_{n_2})$ by two, the other values of $\sigma$ stay unchanged. Since the previous swaps did not modify the values of $\sigma(v),v\in V(G_1)$, we have  $\sigma(v_{n_1})-\sigma(v_k)\ge 9$ (thanks to Property \ref{Equation_Sigma_Des_vi}) and thus $\sigma(w_{n_2})-\sigma(w_{n_2-1})\ge 10$, so we still have $\sigma(w_{n_2})>\sigma(w_{n_2-1})$ after the modification. Again, from Property \ref{Equation_Sigma_Des_vi}, $w_{n_2-1}$ and $w_{n_2}$ cannot be in conflict with any $v_h$, for $h=1,\dots, n_1$, and hence the labelling is antimagic.

 Lastly, we suppose that $v_k$  has been involved in a label swap before and $f(v_kw_{n_2-1})\ne q-1$, otherwise we process as above. Since $v_k$ has been involved in exactly one swap, due to Properties (ii) and (iii), either $f(v_kw_{n_2-1})=q+1$ (when $f(v_kw_{n_2-1})$ and $f(v_kw_{n_2})$ were swapped, i.e., $v_k$ was in conflict with $w_{n_2-1}$) or $f(v_kw_{n_2-1})=q-2$ and $f(v_kw_{n_2-2})=q-1$ (when $f(v_kw_{n_2-1})$ and $f(v_kw_{n_2-2})$ were swapped, i.e., $v_k$ was in conflict with $w_{n_2-2}$). The former case is impossible since after the swap $\sigma(w_{n_2-1})$ is greater than $\sigma(v_k)$, and hence it is not possible to have $\sigma(w_{n_2-1}) + 1 = \sigma(v_k)$. This means that we have $f(v_kw_{n_2-1})=q-2$. Hence, swapping the labels $m$ and $m- 1$ and the labels $q$ and $q-2$, we increase $\sigma(w_{n_2-1})$ by three and decrease $\sigma(w_{n_2})$ by three, the other values of $\sigma$ stay unchanged. Since, as before, $\sigma(w_{n_2})-\sigma(w_{n_2-1})\ge 10$, we still have $\sigma(w_{n_2})>\sigma(w_{n_2-1})$, and, from Property \ref{Equation_Sigma_Des_vi}, $w_{n_2-1}$ and $w_{n_2}$ cannot be in conflict with any $v_h$, for $h=1,\dots, n_1$; hence, the labelling is antimagic.

\end{itemize}
Thus in any case we obtain an antimagic labeling of $G=G_1\otimes G_2$ .
\end{proof}

We immediately obtain the following corollary, since all the graphs that are listed below are join graphs.
 \begin{coro} For a graph $G$ with at least three vertices, if $G=K_n$, or if
$G=K_{n,m}$, or if $G=K_{n_1,n_2,\cdots,n_k}$, or if $G=W_k=C_k\otimes K_1$, or if $G$ is a connected cograph, or if the complement $\bar G$ of $G$ is not connected, then $G$ is antimagic.
 \end{coro}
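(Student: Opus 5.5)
The corollary follows by checking, for each listed family with at least three vertices, that the graph is a join graph $G_1\otimes G_2$ with $G_1,G_2$ nonempty. Then the Theorem applies directly. No new labelling argument is needed; the work is identifying $G_1$ and $G_2$ in each case.

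\textbf{Complete and complete multipartite graphs.} For $K_n$ with $n\ge 3$, take $K_n=K_1\otimes K_{n-1}$. For $K_{n,m}$, take $K_{n,m}=\overline{K_n}\otimes\overline{K_m}$, where $\overline{K_r}$ denotes the edgeless graph on $r$ vertices. For $K_{n_1,\dots,n_k}$ with $k\ge 2$, take $K_{n_1,\dots,n_k}=\overline{K_{n_1}}\otimes K_{n_2,\dots,n_k}$. Here the second factor is read as $\overline{K_{n_2}}$ when $k=2$. If $k=1$ the graph is edgeless and hence not covered, so I take $k\ge 2$ as implicit.

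\textbf{Wheels and connected cographs.} The wheel $W_k=C_k\otimes K_1$ is a join by definition. A connected cograph on at least two vertices is handled through its cotree $T_G$. The root must be a $\otimes$-node, because a $\oplus$-root would make $G$ a disjoint union of at least two graphs and hence disconnected. Passing to the binary modification of Diner et al.\ recalled in the introduction, the root has exactly two children $y_1,y_2$, and $G=G_{y_1}\otimes G_{y_2}$.

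\textbf{Graphs with disconnected complement.} This is the only step requiring an argument. Suppose $\bar G$ is disconnected. I would split $V$ into $A$, the vertex set of one connected component of $\bar G$, and $B=V\setminus A$; both are nonempty. There is no edge of $\bar G$ between $A$ and $B$, so every pair $a\in A$, $b\in B$ is an edge of $G$. Hence $G=G[A]\otimes G[B]$, and the Theorem applies. I expect no real obstacle beyond making these decompositions explicit and confirming that both parts are nonempty and the vertex-count hypothesis $|V|\ge 3$ holds.
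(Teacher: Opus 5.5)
Your proposal is correct and is the paper's argument: the paper obtains the corollary in one line by noting that every listed graph is a join graph and then applying the Theorem. Your explicit decompositions, namely the $\otimes$-root of the (binary) cotree for connected cographs and the split of $V$ along one component of $\bar G$, fill in the details the paper leaves implicit.
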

\section{Conclusion}
We have proved that join graphs distinct from $K_2$ are antimagic. Among them an important subclass is the one of connected cographs. A complete bipartite graph $K_{p,q}$ being a connected cograph, our result gives an alternative proof of the one given by Alon et al. \cite{alon2004}. Concerning cographs that are not connected, and do not have $K_2$ as a component, it is easy to prove that, if $G=K_{i_1}\oplus K_{i_2}\oplus K_{i_p},3\le i_1\le i_2\le\cdots\le i_p,$, i.e., $G$ is the union of $p$ cliques, then $G$ is antimagic. Furthermore Chen and Tay in \cite{chen25} prove the following result: if $G=mP_3,m\ge 1,$ that is $G$ consists of $m$ disjoint induced paths on three vertices, i.e., $m$ disjoint complete bipartite $K_{1,2}$, then $G$ is antimagic if and only if $m\ne 2$; they also give conditions on $m,t\ge 1$ for the antimagicness of $G=mC_3\oplus tP_3$. A future work will be to complete these results and characterize the disconnected cographs that are antimagic, even for the case where $G$ is a collection of disjoint complete bipartite graphs.
Since the class of cographs is the same as the class of $P_4$-free graphs, another open problem consists in proving the antimagicness for connected $P_k$-free graphs for $k\ge 5$. 

As the complement of a disconnected graph is a join graph, we have proved that such a graph is antimagic. Another open problem consists in studying the disjoint unions of join graphs (that contain the disconnected cographs).

\end{document}